\begin{document}

\title{ Controlled Optimization of Quadratic Functions in $\bbR^n$ }
\author{Jean-Jacques Godeme\inst{1}}
\institute{Inria  C\^ote d’Azur  \& Universit\'e C\^ote d’Azur, CNRS, LJAD, France. \underline{e-mail:}\texttt{jeanjacquesgodeme@gmail.fr.}
}
\maketitle
\begin{center}
	\textit{In honor to the 90th Birthday of Professor emiritus R. Tyrrell Rockafellar for all his contributions to mathematics and his role as an inspiration to younger generations.}
\end{center}%
\begin{abstract}$~$
In this work, we introduce and study the controllability of the trajectories of a linear dynamical system, which can be  used to solve the minimization of a quadratic function in finite dimension. We  named this dynamical system the \textit{controlled quadratic gradient flow}.
Finally, we introduce what we call  the \textit{controlled quadratic gradient descent} and the \textit{controlled proximity operator} which are respectively  the Euler explicit  and  implicit discretization  of the  controlled  gradient flow.  
\keywords{Quadratic optimization, controlled gradient flow, application to compressed sensing, \etc..}
\end{abstract}

\section{Introduction}\label{sec:introduction}
\subsection{Problem statement \& Motivations}

In this paper, we consider  the following quadratic optimization problem  
\begin{equation}\label{eq:quadratic-intro}\tag{$\calP$}
    \min_{x\in\bbR^n} f(x)=\frac{1}{2}\pscal{x,Ax}+\pscal{b,x}+c,
\end{equation}
where  \eqref{eq:quadratic-intro} obey to the following assumptions
\begin{assumption}$~$
\begin{itemize}
\item $\bbR$ is the set of real vectors, $n, m$ are positive integers, 
\item $A\in\bbR^{n\times n}$ is a symmetric, positive semidefinite, $b\in\bbR^n$ a vector.
\end{itemize}
\end{assumption}
 \eqref{eq:quadratic-intro} is  the one of the most famous approach to solve problems arising in  many fields  like  operational research, inverse problems, machine learning and automatic differentiation and has applications in many real life problem such as  X-ray crystallography and light scattering.
 For instance, the least-square formulation of the compressed sensing can be written as: 
\begin{equation}\label{eq:least-square}
    \min_{x\in\bbR^n}\frac{1}{2}\normm{Bx-y}^2=\frac{1}{2}\pscal{x,\transp{B}Bx}+\pscal{-\transp{B}y,x}+\normm{y}^2.
\end{equation}
where we want to recover the signal $\avx\in\bbR^n$ from  the measurements vector  $y=B\avx$. In such case,  one  can identify that $A=\transp{B}B, b=-\transp{B}y$ and $c=\normm{y}^2.$ 
The set of critical points of \eqref{eq:quadratic-intro} is given by  
\[
\crit(f)=\Ba{x\in\bbR^n \qsubjq Ax+b=0},
\]
it is a set of linear equations.

\subsection{Prior work and Contributions}
\paragraph{Optmization}$~$

Optimization is an important discipline of mathematics with many applications in real-world problems, see for instance the encyclopedia of optimization \cite{panos2009}.
To optimize $f$ and then solve \eqref{eq:quadratic-intro}, the basic and intuitive idea is to  make a search for the optimal value using the value function $f$ . One can see this has a ``Rollercoaster approach'', it  consists in finding  the minimum  by using only $f$. It means checking all the value of $f$ to find the minimum. This method is  easy to implement on simple or very basic function but for more sophisticated function, it is easy to see that  it is an NP-hard problem. In view to improve the seek, one can use more information on $f$ such as first-order derivatives. The pioneered work of Cauchy \cite{cauchy1847}, based on following the negative of the  gradient and is called the  \textit{gradient-flow}. Before Cauchy, we have the work in Latin by Newton \cite{newton_60} in 1760, for finding the zeros of a function $f$ and can be applied to the gradient of $f$, yielding to Newton methods.The convergence of the Newton methods has been studied in works such as   \cite{kantorovich_49,rall1966}. Concerning the gradient flow, it convergence has been intensively studied by \cite{rockafellar_variational_1998,rockafellar_convex_1970, mordukhovich_variational_2018,brezis78,attouchfadili2023}. Let us now turn our attention to the gradient flow of this problem, we have 
\begin{equation}\label{eq:quad_gradflow}\tag{grad-flow}
    \dot{x}(t)=-Ax(t)-b.
\end{equation}
This equation ensure to find an element of the critical points of $f$. It has been shown for convex function, for instance by Br\'ezis, Bruck and Haddad, that the  trajectories weakly converges to the set of $\argmin(f)$.
Other works try to solve the optimization problem using several approach such as inertial scheme: Polyak methods in \cite{polyak_63,polyak_64}, Nesterov methods \cite{nesterov_18} , the heavy ball \cite{attouch_00, attouch_18} with appropriate convergences.
\paragraph{Control theory}$~$

 At this step, our main inspiration comes from control theory \cite{control2007,Raissi2019,trelat2005,Iri77}. Roughly speaking, control theory answers the question of whether it is possible or not to control the trajectories of a dynamical system between two known states. Our goal is to control the trajectories of the quadratic gradient flow by acting on  this system with a  another ``function'' $u$  commonly  called \textit{control}. Why  control theory in optimization? because the gradient flow only allow to move in the direction of the gradient, our main purpose here is to act independently on the system to ensure that the system converges by allowing different kind of direction on the gradient flow system.
We call our new approach \textit{controlled optimization}. One can also observe that every other methods such as the set of inertial methods are actually some kind of control over the gradient flow with an appropriate choice of direction (control) over the system.
To the best of our knowledge the closest approach to  ours is this paper \cite{recht_tour_2018} where the author proposed a model of reinforcement learning where we have an objective function subject to a constraint  which is a controlled dynamical system.  The two approaches are completely different because we impose the control directly on the gradient-flow dynamics.

\subsection{Outline of the paper}
In Section~\ref{sec:ctrl-opt}, we develop our main theory of  controlled-optimization , where we study the controllability of the gradient-flow  for minimizing a quadratic function then we discretize the corresponding system using an Euler explicit scheme and we find the quadratic gradient descent for solving the minimization of quadratic function. In Section~\ref{sec:ctrl-prox}, we focus on properly define the controlled Moreau's proximity operator. In Section~\ref{num_expe}, we  apply our controlled gradient descent to solve an example compressed sensing problem for Gaussian measurements.
\section{Notations and preliminaries}\label{sec:preliminaries}

\paragraph{Vectors and matrices} 
We denote $\pscal{\cdot,\cdot}$ a scalar product on $\bbR^n$ and $\normm{\cdot}$ the corresponding norm. $B(x,r)$ is the corresponding ball of \tcb{radius $r$} centered at $x$ and $\mathbb{S}^{n-1}$ is the corresponding unit sphere. Moreover, $\norm{\cdot}{p}, p\in[1,\infty]$ stands for the $\ell_p$ norm. \tcb{ We denote by $\Gamma_0(\bbR^n)$ the class of proper lsc convex function on $\bbR^n$. }    Given a matrix $M \in \bbR^{m \times n}$, $\transp{M}$ is its transpose.

\paragraph{Functions}
A function  $f: \bbR^n \to \barR = \bbR \cup \{\pinf\}$ is closed (or lower semicontinuous, lsc) if its epigraph is closed. The effective domain of $f$ is $\dom(f) = \enscond{x\in\bbR^n}{f(x) < +\infty}$ and $f$ is proper if $\dom(f) \neq \emptyset$ as is the case when it is finite-valued. 

 The Legendre-Fenchel conjugate of $f$ is $f^*(z)=\sup\limits_{x \in \bbR^n} \pscal{z,x} - f(x)$.

We will need to properly define the exponential of a matrix or a linear operator over $\bbR^n$ 
\begin{equation}
e^A\eqdef\sum_{k=0}^{\infty}\frac{A^k}{k !}, \qwhereq A^k=\underbrace{A\times A\times\ldots\times A}_{\text{$k$ times}},
\end{equation}
Let us define for any $\alpha\in\bbR^{n}$ and $x\in\bbR$ we define the following polynoms
\[
P_{\alpha}(x)={\alpha}_0+\alpha_1x+\ldots+\alpha_n x^n.
\]
\begin{theorem}[Hamilton-Cayley]\label{thm:hamilton-cayley} For any $A\in\bbR^{n\times n}$ be a linear operator then their exists $a\in\bbR^{n+1}$ such that
\[
P_a(A)=0.
\]
\end{theorem}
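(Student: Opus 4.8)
This is the classical Cayley--Hamilton theorem, and the natural candidate for $a$ is the vector of coefficients of the characteristic polynomial $\chi_A(\lambda) = \det(\lambda I - A) = \sum_{k=0}^{n} a_k \lambda^k$, which is monic ($a_n = 1$, so in particular $a \neq 0$). The plan is to establish $P_a(A) = \sum_{k=0}^{n} a_k A^k = 0$ via the adjugate-matrix identity, which is the standard way to make rigorous the (otherwise fallacious) idea of ``substituting $\lambda = A$'' into $\det(\lambda I - A) = 0$.

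First I would recall the fundamental identity $M\,\mathrm{adj}(M) = \det(M)\, I$, valid for every square matrix $M$, and apply it with $M = \lambda I - A$, treating $\lambda$ as a formal indeterminate (equivalently, working in $\bbR[\lambda]^{n \times n}$). Each entry of $\mathrm{adj}(\lambda I - A)$ is, up to sign, the determinant of an $(n-1)\times(n-1)$ submatrix of $\lambda I - A$, hence a polynomial in $\lambda$ of degree at most $n-1$; so there exist matrices $C_0, \dots, C_{n-1} \in \bbR^{n \times n}$ with $\mathrm{adj}(\lambda I - A) = \sum_{j=0}^{n-1} \lambda^j C_j$. Substituting gives $(\lambda I - A)\sum_{j=0}^{n-1}\lambda^j C_j = \big(\sum_{k=0}^{n} a_k \lambda^k\big) I$.

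The next step is to expand the left-hand side and equate matrix coefficients of each power $\lambda^k$, $k = 0, \dots, n$, which yields the finite system $C_{k-1} - A C_k = a_k I$ with the conventions $C_{-1} = C_n = 0$. I would then left-multiply the $k$-th equation by $A^k$ and sum over $k = 0, \dots, n$: the left-hand side telescopes to $0$ since the term $A^{k+1} C_k$ coming from the $k$-th equation cancels the term $A^{k+1} C_{(k+1)-1}$ coming from the $(k+1)$-th equation, while the right-hand side is exactly $\sum_{k=0}^{n} a_k A^k = P_a(A)$. Hence $P_a(A) = 0$.

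The only genuinely delicate point is the one the argument is built to avoid: one cannot naively set $\lambda = A$ in $M\,\mathrm{adj}(M) = \det(M)\, I$, because after that substitution the two sides live in different algebraic worlds and the scalar identity collapses; the coefficient-matching plus telescoping is precisely the device that makes the deduction legitimate. A secondary routine point to verify is the degree bound on the entries of $\mathrm{adj}(\lambda I - A)$, which follows from cofactor expansion. (As an alternative I would only mention, one can check the identity on diagonalizable matrices, where $\chi_A(A) = P\,\chi_A(D)\,P^{-1} = 0$ is immediate, and then pass to the general case by density of diagonalizable matrices in $\mathbb{C}^{n \times n}$ and continuity of $A \mapsto \chi_A(A)$; but the adjugate proof is self-contained and does not require complexification.)
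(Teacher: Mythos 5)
Your proposal is correct, but note that the paper offers no proof of this theorem at all: it is stated in the preliminaries as a classical fact and used as a black box inside the proof of Theorem~\ref{theo-cond}, so there is no in-paper argument to compare yours against. Your adjugate proof is the standard rigorous one and is complete: the degree bound on the entries of $\mathrm{adj}(\lambda \Id - A)$, the coefficient identities $C_{k-1} - A C_k = a_k \Id$ with $C_{-1}=C_n=0$, and the telescoping after left-multiplication by $A^k$ all check out, and you correctly flag the one classical pitfall (naively substituting $\lambda = A$ into $\det(\lambda \Id - A)=0$). One remark worth making explicit: as literally written, the statement is vacuous, since $a=0$ gives $P_a(A)=0$ for free; the actual content is that a \emph{nonzero} (indeed monic, degree-$n$) $a\in\bbR^{n+1}$ works, which is exactly what your choice $a_n=1$ delivers. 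Your parenthetical alternative (verify on diagonalizable matrices over $\bbC$, then conclude by density and continuity of $A\mapsto \chi_A(A)$) is also valid, but the adjugate route is preferable here since it stays over $\bbR$ and requires no topological argument.
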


\section{Controlled Optimization}\label{sec:ctrl-opt}
We  consider in this work \eqref{eq:quad_gradflow} on which we act with an additive control. We have
\begin{equation}\label{eq:control-quad}
    \dot{x}(t)=-Ax(t)-b+Bu(t),
\end{equation}
where $B\in\bbR^{n\times m}$ with $m\leq n$ and $u$ is the control taking values in $\bbR^m$.  Let us simplify the notations, we get 
\begin{equation}\label{eq:control-quad-0}
    \dot{x}=-Ax+Bu-b.
\end{equation}
\eqref{eq:control-quad-0} is also known in the control theory literature as the \textit{time invariant linear control system}.
Let $T_0$ and $T_1$ be two real numbers such that $T_0<T_1$. We want to study the following controlled system 
\begin{equation}\label{eq:control-quad-init}\tag{cq-Gf}
\begin{cases}
   \dot{x}(t)=-Ax+Bu-b\\
   x(T_0)=\xo\in\bbR^n, u\in L^{\infty}((T_0,T_1);\bbR^m).  
\end{cases}
\end{equation}
\subsection{Existence of solution}\label{eq:existence-solution}
Let us consider the system \eqref{eq:control-quad-init}, we have the following existence result. 
\begin{proposition}[Existence result]
For any $\xo\in\bbR^n$, one  has that the solution to \eqref{eq:control-quad-init} is unique.
    \end{proposition}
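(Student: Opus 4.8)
The plan is to recognize \eqref{eq:control-quad-init} as a linear non-homogeneous ODE with a measurable essentially bounded forcing term, and to apply the Cauchy--Lipschitz (Picard--Lindelöf) theorem in its Carathéodory form. Write the right-hand side as $F(t,x) = -Ax + Bu(t) - b$. For fixed $t$, the map $x \mapsto F(t,x)$ is affine, hence globally Lipschitz with constant $\normm{A}$ (operator norm), uniformly in $t$; and for fixed $x$, the map $t \mapsto F(t,x)$ is measurable since $u \in L^\infty((T_0,T_1);\bbR^m)$, with $\normm{F(t,x)} \le \normm{A}\normm{x} + \normm{B}\,\normm{u}_{L^\infty} + \normm{b}$, an $L^\infty$ (hence locally $L^1$) bound on every bounded set. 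These are exactly the hypotheses of the Carathéodory existence-and-uniqueness theorem, which yields a unique absolutely continuous solution $x(\cdot)$ on $[T_0,T_1]$ satisfying the equation for a.e.\ $t$ and the initial condition $x(T_0) = \xo$.

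Since the statement only asks for uniqueness, I would carry out the argument in the order: first fix $\xo$ and suppose $x_1, x_2$ are two absolutely continuous solutions on $[T_0,T_1]$; set $w = x_1 - x_2$. Then $\dot w(t) = -A w(t)$ for a.e.\ $t$ with $w(T_0) = 0$. Taking the scalar product with $w(t)$ gives $\frac{1}{2}\frac{d}{dt}\normm{w(t)}^2 = -\pscal{w(t), A w(t)} \le 0$ for a.e.\ $t$ (using $A \succeq 0$), so $\normm{w(t)}^2$ is nonincreasing and vanishes at $T_0$, forcing $w \equiv 0$. Alternatively, a bare Grönwall estimate $\normm{w(t)} \le \normm{A}\int_{T_0}^t \normm{w(s)}\,ds$ gives $w \equiv 0$ without even invoking positive semidefiniteness. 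Either way uniqueness follows; for completeness I would also note existence via the explicit variation-of-constants formula
\[
x(t) = e^{-(t-T_0)A}\xo + \int_{T_0}^{t} e^{-(t-s)A}\bigl(Bu(s) - b\bigr)\,ds,
\]
which is well defined because $s \mapsto e^{-(t-s)A}(Bu(s)-b)$ is integrable, and one checks by differentiation that it solves the system.

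I do not expect a genuine obstacle here: the only subtlety worth flagging is the regularity class in which ``solution'' is meant. Because $u$ is merely $L^\infty$ and not continuous, one cannot ask for a $C^1$ solution; the correct notion is an absolutely continuous (equivalently, $W^{1,\infty}$) function satisfying the ODE almost everywhere, i.e.\ a Carathéodory solution. Once that is pinned down, the argument is entirely standard, and the explicit formula above makes both existence and uniqueness transparent. The mildly delicate bookkeeping — measurability of $s \mapsto e^{-(t-s)A}(Bu(s)-b)$ and justification of differentiating under the integral sign — is routine and I would not grind through it in detail.
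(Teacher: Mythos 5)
Your uniqueness argument is essentially the paper's own: both set $w=x-y$ (resp.\ $g(t)=\tfrac12\normm{x(t)-y(t)}^2$), observe that the control and $b$ cancel so $\tfrac{d}{dt}\tfrac12\normm{w}^2=-\pscal{Aw,w}\le 0$ by positive semidefiniteness, and conclude from $w(T_0)=0$ that $w\equiv 0$. You additionally supply existence (via Carath\'eodory hypotheses and the variation-of-constants formula) and correctly flag the regularity class needed for $L^\infty$ controls, neither of which the paper's proof addresses despite the proposition's title.
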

\begin{proof}
    Let $x(t)$ and $y(t)$ two differents curves generated by the controlled gradient flow starting at the same point $x_0\in\bbR^n$, we consider the following distance  
    \[
    g(t)=\frac{1}{2}\normm{x(t)-y(t)}^2, 
    \]
  therefore 
  \begin{align*}
      g'(t)&=\pscal{\dot{x}(t)-\dot{y}(t),x(t)-y(t)}\\
      &=\pscal{-Ax(t)-b+Bu(t)+Ay(t)+b-Bu(t),x(t)-y(t)}\\
      &=-\pscal{Ax(t)+b-Ay(t)-b,x(t)-y(t)}\geq0.
  \end{align*}
  This implis that $g'(t)\leq0$ thus $g$ is a decreasing function of $t$ thus 
  $$
  \forall t>0,\quad g(t)\leq g(0)=0.
  $$ 
\end{proof}

\subsection{Controllability of the gradient flow for quadratic functions}\label{sec:ctrl-gf}
Let us properly define when a system is called ``controllable'' in finite dimension. 

\begin{definition}$~$
\begin{itemize}
\item \textbf{(Controllability)}  Let us consider the system \eqref{eq:control-quad-init}, we say that  \eqref{eq:control-quad-init} is  controllable from $\xo\in\bbR^n$ in time $T>0$ if their exist $\tilde{T}\in[T_0,T_1]$ such that $T= \tilde{T}-T_0$ and  one can reach from the initial  point $x(T_0)=\xo$  any point of the space by acting on the system with a control $u\in L^{\infty}((T_0,T_1);\bbR^m)$ .

\item \textbf{(Attainable  from a set)} Let $\calC$ be a subset of $\bbR^n$. 
The set that one can attain from $\calC$  denoted $\calA[\calC]$ is the set of all points $x(T_1)$ where $x$ is an admissible trajectory to some control $u$ and where $x(T_0)\in\calC$.
\end{itemize} 
\end{definition}

\begin{remark}$~$
\begin{itemize}
	\item  From a control theory perspective, a dynamical system is said to be \textit{controllable} if $\calA[\bbR^n]=\bbR^n.$ Moreover, we can also say that a dynamical system is said to be \textit{controllable} from $\calC$ if $\calA[\calC]=\bbR^n.$ 
    \item From an optimization perspective, given any  $\calC\subset\bbR^n$ we are only  interested to know whether  $\calA[\calC]\cap\Argmin(f)\neq\emptyset$.  In this case, with a good choice of control $u(\cdot)$, we can solve \eqref{eq:control-quad-init} and we say that the system is \textit{opt-controllable} (optimization-controllable) from the set $\calC$. 
    
  \item  If we only have  that $\calA[\calC]\cap\crit(f)\neq\emptyset$ we will simply say that the system is \textit{weakly opt-controllable} from the set $\calC.$
    
    \item One can see that \textit{controllability} from a point  implies \textit{opt-controllability} and the latter implies \textit{weak opt-controllability}.
 
    \item In this work, we will be focusing only on showing the controllability of the gradient flow for solving the quadratic optimization problem. 
\end{itemize}
\end{remark}

The following result  is  fundamental in linear control theory. It gives a necessary and sufficient condition for controllability. It is also known as the rank condition.

\begin{theorem}[Controllability of Gradient flow]\label{theo-cond} The \textit{control quadratic Gradient flow} \eqref{eq:control-quad-init} system is controllable on $[T_0,T_1]$ if and only if we have 
\begin{equation}\label{eq:Kalman-cond}\tag{\small{Pontryagin-Kalman} cond}
    \Span\Bba{(-A)^iBy; y\in \bbR^m, i=0,\cdots,n-1}=\bbR^n.
\end{equation}
\end{theorem}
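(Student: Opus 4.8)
The plan is to prove the classical Kalman rank condition, adapted to the specific system $\dot x = -Ax + Bu - b$. First I would use the variation-of-constants (Duhamel) formula to write the solution explicitly: for $t \in [T_0,T_1]$,
\[
x(t) = e^{-(t-T_0)A}\xo + \int_{T_0}^{t} e^{-(t-s)A}\bigl(Bu(s) - b\bigr)\,ds.
\]
Since the affine term $-b$ is a fixed, control-independent forcing, controllability of this system is equivalent to controllability of the linear system $\dot y = -Ay + Bv$ (translate by the particular solution driven by $-b$). So it suffices to show that the set of reachable states of $\dot y = -Ay + Bv$ from the origin in any positive time is all of $\bbR^n$ if and only if the Kalman space $\calK \eqdef \Span\{(-A)^i B y : y \in \bbR^m,\ i = 0,\dots,n-1\}$ equals $\bbR^n$.

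For the direction ``rank condition $\Rightarrow$ controllable'', I would argue by contraposition on the reachable set. Fix $T = T_1 - T_0 > 0$ and consider the reachable subspace $\calR_T = \{\int_0^T e^{-\sigma A} B v(\sigma)\,d\sigma : v \in L^\infty\}$; this is a linear subspace of $\bbR^n$. If $z \perp \calR_T$, then $\pscal{z, e^{-\sigma A} B v(\sigma)} = 0$ for all admissible $v$ and a.e.\ $\sigma \in [0,T]$, hence $\transp{B} e^{-\sigma \transp{A}} z = 0$ for all $\sigma \in [0,T]$ (choosing $v$ to pick out each coordinate). Differentiating this identity repeatedly at $\sigma = 0$ gives $\transp{B}(-\transp{A})^i z = 0$ for every $i \ge 0$, i.e.\ $z \perp (-A)^i B y$ for all $i \ge 0$ and all $y \in \bbR^m$. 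By the Hamilton--Cayley theorem (Theorem~\ref{thm:hamilton-cayley}), $(-A)^i$ for $i \ge n$ is a linear combination of $I, (-A), \dots, (-A)^{n-1}$, so in fact $z \perp \calK$. Thus $\calR_T^\perp \subseteq \calK^\perp$, i.e.\ $\calK \subseteq \calR_T$; conversely each $(-A)^i B y$ lies in $\calR_T$ by the same differentiation argument run in reverse (or by noting $\calR_T$ is $(-A)$-invariant and contains $\mathrm{Im}(B)$), so $\calR_T = \calK$. Therefore $\calR_T = \bbR^n$ iff $\calK = \bbR^n$, and when $\calK = \bbR^n$ one reaches any target $x_1$ by solving for the appropriate $v$; the explicit control can be taken as $v(\sigma) = \transp{B} e^{-\sigma \transp{A}} W^{-1}(\text{target})$ with the controllability Gramian $W = \int_0^T e^{-\sigma A} B \transp{B} e^{-\sigma \transp{A}}\,d\sigma$, which is invertible precisely under the rank condition.

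For the converse, ``controllable $\Rightarrow$ rank condition'', I would again use contraposition: if $\calK \neq \bbR^n$, pick $0 \neq z \perp \calK$. Then, using Hamilton--Cayley to extend $\transp{B}(-\transp{A})^i z = 0$ from $i = 0,\dots,n-1$ to all $i \ge 0$, the power series for $e^{-\sigma \transp{A}}$ gives $\transp{B} e^{-\sigma \transp{A}} z = 0$ for all $\sigma$, hence $\pscal{z, x(T_1) - e^{-TA}\xo + \int \cdots b} = 0$ for every control, so the reachable set lies in a proper affine hyperplane and the system is not controllable.

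The main obstacle, and the step deserving the most care, is the passage between the integral (infinite-dimensional, $L^\infty$-control) characterization of the reachable set and the finite, purely algebraic Kalman space: one must justify that $\transp{B} e^{-\sigma \transp{A}} z \equiv 0$ on $[0,T]$ is equivalent to the vanishing of all derivatives at $0$, which uses real-analyticity of $\sigma \mapsto e^{-\sigma \transp{A}}$, and then that only the first $n$ derivatives matter, which is exactly where Theorem~\ref{thm:hamilton-cayley} enters. A secondary technical point is confirming that the fixed affine drift $-b$ genuinely does not affect controllability — it only shifts the target — so the problem reduces cleanly to the homogeneous linear case.
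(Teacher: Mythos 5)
Your proposal follows essentially the same route as the paper's own proof: both rest on the variation-of-constants formula, reduce controllability to surjectivity of the reachability map, and pass between the integral characterization and the algebraic Kalman space via the orthogonal-complement argument combined with the Hamilton--Cayley theorem and analyticity of the matrix exponential. Your write-up is in fact the more complete of the two, since you explicitly handle the affine drift, supply the Gramian-based construction of the steering control, and treat both implications cleanly, whereas the paper's appendix leaves these points implicit.
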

\begin{proof}
We report the proof to the Appendix Section~\ref{pf:theo-cond}.
\end{proof}
\begin{remark}$~$
    \begin{itemize}
        \item Theorem~\ref{theo-cond} states that under the \eqref{eq:Kalman-cond}, the system \eqref{eq:control-quad-init} is controllable. Roughly speaking, the gradient flow for minimizing the function $f(x)=\frac{1}{2}\pscal{x,Ax}+\pscal{b,x}+c$ is controllable. Therefore,  given an initial point $x(T_0)=\xo$  and a desired target   $x(T_1)=\xsol_d\in\crit(f)$ one can control gradient flow from $\xo$ to $\xsol_d$. This result is quite important since it means that instead of running the gradient flow to find any element in  $\crit(f)$, one can control the gradient flow in such a way that we can reach our desired state $\xsol_d$.

        \item A similar condition \eqref{eq:Kalman-cond}  called the Kalman condition appears first in a paper by Lev. Pontryagin in 1959 and later in a joint paper by Rudolph Kalman \etal in 1963, it also known as the Kalman condition in control theory but we think the name \textit{Pontryagin-Kalman} is more suitable for optimization.
        \item The Pontryagin-Kalman condition or controllability condition can be equivalently stated as a rank condition \ie
        \begin{equation}\tag{Pontryagin-Kalman cond}
            \rank(\scrC)=n\qwhereq \scrC\eqdef\left[B|(-A)B|A^2B\cdots|(-A)^{n-1}B\right].
        \end{equation}
       \item Although controllability of \eqref{eq:control-quad-init} is really interesting, it is not realistic since we can not control a real-world system with any desired control typically with a control which has <<high energy>>.
    \end{itemize}            
\end{remark} 
\paragraph{\textbf{Application to Newton's methods for solving $\calP$}}
 Let us consider that we want to solve the same problem using the Newton's Methods and we hope to get the same  kind of condition. We then consider the following equation 
\begin{equation}\label{eq:control-quad-NM}\tag{quadNM}
\begin{cases}
   A\dot{x}(t)=-Ax(t)-b\\
   x(T_0)=\xo\in\bbR^n,
\end{cases}
\end{equation}
Let us introduce an additive control, similarly to the gradient flow system, we have
\begin{equation}\label{eq:control-quad-NM}\tag{cq-quadNM}
\begin{cases}
   A\dot{x}(t)=-Ax(t)-b+Bu(t)\\
     x(T_0)=x_0\in\bbR^n, u\in L^{\infty}((T_0,T_1);\bbR^m).  
\end{cases}
\end{equation}
\begin{corollary}[Controllability of Newton method]\label{cor:ctrl-nm} Let us consider the problem \eqref{eq:quadratic-intro}, the dynamic  \eqref{eq:control-quad-NM} is controllable if and only if
\begin{equation}\label{eq:calCnm}
\rank(B)=n.   
\end{equation}
\end{corollary}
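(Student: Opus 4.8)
The plan is to reduce \eqref{eq:control-quad-NM} to a system already covered by Theorem~\ref{theo-cond} and then specialize the Pontryagin-Kalman condition. First I would note that the hypothesis $\rank(B)=n$ together with $B\in\bbR^{n\times m}$ and $m\leq n$ forces $m=n$ and $B$ invertible, and that for the Newton dynamic to define a genuine flow one works with $A$ invertible (positive definite), so $A^{-1}$ exists. Multiplying the ODE in \eqref{eq:control-quad-NM} on the left by $A^{-1}$ gives $\dot{x}(t)=-x(t)+A^{-1}Bu(t)-A^{-1}b$, which is exactly the time-invariant linear control system \eqref{eq:control-quad-0} after the substitutions $A\leftarrow I_n$, $B\leftarrow A^{-1}B$, $b\leftarrow A^{-1}b$. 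Since left-multiplication by $A^{-1}$ is a bijection, the two systems have the same admissible trajectories, hence the same attainable sets, so one is controllable if and only if the other is.

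Next I would invoke Theorem~\ref{theo-cond} for the transformed system: it is controllable if and only if $\Span\Bba{(-I_n)^i(A^{-1}B)y\;;\;y\in\bbR^n,\ i=0,\ldots,n-1}=\bbR^n$. Because $(-I_n)^i=(-1)^iI_n$, each generator is a scalar multiple of $A^{-1}By$, so this span collapses to $\mathrm{range}(A^{-1}B)$. Hence controllability is equivalent to $\mathrm{range}(A^{-1}B)=\bbR^n$, i.e.\ $\rank(A^{-1}B)=n$, and since $A^{-1}$ is invertible $\rank(A^{-1}B)=\rank(B)$, which gives the claimed equivalence \eqref{eq:calCnm}.

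The one genuinely delicate point, and the step I expect to be the main obstacle, is justifying the reduction when $A$ is only positive semidefinite rather than definite: then $A\dot{x}=\cdots$ is a differential-algebraic equation, its left-hand side is confined to $\mathrm{range}(A)$, and $\dot{x}$ is unconstrained along $\ker(A)$. I would handle this either by stating the corollary in the natural Newton setting $A\succ 0$ (so that $A^{-1}$ literally exists and the above argument is clean), or by splitting $\bbR^n=\ker(A)\oplus\mathrm{range}(A)$: on $\ker(A)$ the trajectory can be steered arbitrarily, while on $\mathrm{range}(A)$ one repeats the argument with $A$ replaced by its invertible restriction $A|_{\mathrm{range}(A)}$, after checking the consistency constraint that $-Ax-b+Bu\in\mathrm{range}(A)$ is achievable (which it is precisely because $\rank(B)=n$ makes $\transp{B}w\neq 0$ for every $w\in\ker(A)\setminus\{0\}$). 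For the exposition I would adopt the first, cleaner route.
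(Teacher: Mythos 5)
Your proof is correct and follows essentially the same route as the paper: the paper substitutes $z(t)\eqdef Ax(t)$ to obtain $\dot z=-z-b+Bu$ and then applies Theorem~\ref{theo-cond} with drift $-\Id$, so the Kalman matrix $\left[B\,|\,-B\,|\,\cdots\right]$ has rank $\rank(B)$; your left-multiplication by $A^{-1}$ is the same change of variables read in the opposite direction. Your closing remark about the merely positive semidefinite case identifies a gap the paper silently skips --- its substitution $z=Ax$ only transfers controllability back to the $x$-variable when $A$ is invertible --- so flagging and resolving that point is an improvement rather than a deviation.
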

\begin{proof}
Let us  define $z(t) \eqdef Ax(t)$, we get that 
\begin{equation}\label{eq:control-quad-NM-dom}\tag{cq-NMd}
\begin{cases}
   \dot{z}(t)=-z(t)-b+Bu(t)\\
     z(T_0)=z_0\in\bbR^n, u\in L^{\infty}((T_0,T_1);\bbR^m)
\end{cases}
\end{equation}
Then, we apply  Theorem~\ref{theo-cond} to have the following condition
\[
\rank(\calC_{NM})=n\qwhereq \calC_{NM}\eqdef\left[B|(-1)B|B|\cdots|(-1)^{n-1}B\right].
\]
Therefore, $\rank(\calC_{NM})=n$ if and only if $\rank(B)=n$.  
\end{proof}

\begin{remark} $~$
\begin{itemize}
\item One can see \eqref{eq:control-quad-NM-dom} as a gradient flow in the domain of the linear operator. For instance,  the Fourier Transform we have a gradient flow in the frequency  domain.
\item We observe that Pontryagin-Kalman condition for Newton method \eqref{eq:calCnm} only depends on the linear operator $B$.
\item If $A$ is invertible then the Pontryagin-Kalman condition for Newton method  can be written as 
\[
\rank(\calC_{NM})=n\qwhereq \calC_{NM}\eqdef\left[A^{-1}B|(-1)A^{-1}B|A^{-1}B|\cdots|(-1)^{n-1}A^{-1}B\right],
\]
indeed the Newton's methods \eqref{eq:control-quad-NM} becomes
\begin{equation}
    \dot{x}(t)=-x(t)-A^{-1}b+A^{-1}Bu.
\end{equation}
We can observe that the rank condition only depends on $B$, since $A$ is invertible.
\end{itemize}
\end{remark}
\paragraph{\textbf{Observations on Stochastic methods}}
From a continuous point of view, we also observe that the Langevin  Monte-Carlo equation applied to solve \eqref{eq:quadratic-intro} and known has the continuous version of the Stochastic gradient descent (SGD) is given by 
\[
dX_t=-(AX_t+b)dt+\sqrt{2}dB_t.
\]
Let us remark that the second term can be seen has a stochastic control over the dynamical systems. We leave the study of the controllability of this equation as a perspective.

\begin{definition}[Feedback control]\label{lem:expres-cont}
Let us consider a controlled sytem \eqref{eq:control-quad-init}, we define a \textit{feedback control} has a control $u$ such that there exists a linear operator $K:\bbR^n\to\bbR^m$, 
\begin{equation}\label{eq:choice-u-2}
u(t)\eqdef Kx(t)=Kx_t,
\end{equation}
where $K\in\bbR^{m\times n}$ is a constant matrice. We substitute the control expression and we get the system 
\begin{equation}\label{eq:cqgf}
\dot{x}(t)=-Ax(t)+Bu(t)-b=\Ppa{-A+BK}x(t)-b.
\end{equation}
\end{definition}

\subsection{Convergence of the objective function}\label{sec:label-conv-control}
\begin{proposition}[Behavior of the value function]\label{prop:f-behavior}
Let us consider \eqref{eq:control-quad-init}, we set  $x(t)=x_t$ and the control $u(t)=u_t$,  the following  holds
\begin{equation}\label{eq:ctrl-vf}
\dfrac{d}{dt}f(x_t)=-\normm{Ax_t+b}^2 +\pscal{Ax_t+b; Bu_t}.
\end{equation}
\end{proposition}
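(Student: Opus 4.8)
The plan is to prove this by a direct application of the chain rule, the only ingredient beyond that being the explicit form of the gradient of a quadratic and the symmetry of $A$. First I would record that, since $A$ is symmetric, the gradient of $f$ is $\nabla f(x)=Ax+b$ for every $x\in\bbR^n$; this is where the standing assumption that $A$ is symmetric is used, and it is worth spelling out because without symmetry one would get $\frac12(A+\transp{A})x+b$ instead. I would also note that $f$ is $C^\infty$ (indeed a polynomial), so there is no subtlety about differentiability of $f$ itself.

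Next I would address the regularity of the trajectory $t\mapsto x_t$. Because $u\in L^\infty((T_0,T_1);\bbR^m)$ and the right-hand side of \eqref{eq:control-quad-init} is affine in $x$, the solution $x_t$ is absolutely continuous on $[T_0,T_1]$ and satisfies $\dot x_t=-Ax_t+Bu_t-b$ for almost every $t$. Hence the composition $t\mapsto f(x_t)$ is absolutely continuous as well, and the classical chain rule gives, for a.e.\ $t$,
\[
\dfrac{d}{dt}f(x_t)=\pscal{\nabla f(x_t),\dot x_t}=\pscal{Ax_t+b,\;-Ax_t+Bu_t-b}.
\]

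Finally I would simplify the inner product. Writing $-Ax_t+Bu_t-b=-(Ax_t+b)+Bu_t$ and using bilinearity of $\pscal{\cdot,\cdot}$ yields
\[
\dfrac{d}{dt}f(x_t)=-\pscal{Ax_t+b,Ax_t+b}+\pscal{Ax_t+b,Bu_t}=-\normm{Ax_t+b}^2+\pscal{Ax_t+b,Bu_t},
\]
which is exactly \eqref{eq:ctrl-vf}. There is no real obstacle here; the proof is essentially a one-line computation. The only point that deserves a little care is the justification of the chain rule in the merely absolutely continuous (rather than $C^1$) setting, and the use of the symmetry of $A$ in identifying $\nabla f$ — if one wanted to be fully rigorous about the a.e.\ statement one could also remark that when $u$ is, say, continuous the identity holds for every $t$.
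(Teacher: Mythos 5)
Your proof is correct and follows essentially the same route as the paper's: apply the chain rule to $t\mapsto f(x_t)$, substitute $\dot x_t=-(Ax_t+b)+Bu_t$, and expand the inner product by bilinearity. The extra care you take about the symmetry of $A$ in identifying $\nabla f$ and about the a.e.\ validity of the chain rule for an $L^\infty$ control is a welcome refinement, but it does not change the argument, which the paper carries out in the same one-line computation.
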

\begin{proof}
The proof can be found in Appendix~\ref{proof:prop:f-behavior}.
\end{proof}
\begin{remark}$~$
\begin{itemize}
\item Under the feedback control \ie replacing \eqref{eq:choice-u-2} in \eqref{eq:ctrl-vf},  we get 
\begin{equation}
\dfrac{d}{dt}f(x_t)=-\normm{Ax_t+b}^2-\norm{x_t}{\transp{A}BK}^2 +\pscal{b;BKx_t}
\end{equation}
\item This expressions explicitly show how the choice of control affects the descent behavior of $f$. Indeed,  we can observe that $f$ is not only decreasing along the trajectory. The derivative of $f$ with respect to $t$ along the trajectory can be  split in two parts. The first  is the usual negative of the norm of the gradient which ensure that $f$ is decreasing along the trajectory and the second is a term which involve the control $u_t$. Thanks to the control term, $f$ is not only decreasing along the trajectory but also can have different behaviors (acceleration, decceleration,\etc)this allow to easily attain the pre-image of any value of the function in the space up to a good choice of control.
\end{itemize}
\end{remark}

\subsection{Explicit Discretization and convergence rates}\label{sec:explicit-discretization}
The forward Euler method or the explicit discretization of \eqref{eq:control-quad-init} yield to 
\begin{equation}
   \forall k\in\bbN,\quad  \frac{\xkp-\xk}{\gak}=-A\xk-b +Bu_k \Leftrightarrow \xkp=\xk-\gak\Ppa{A\xk+b}+Bu_k,
\end{equation}
where $\xk=x(t_k), u_k=u(t_k)$ with $t_k=k\gak$ and $\gak$ is the stepsize of the discretization of the time interval $[T_0,T_1].$
\begin{definition}[Controlled gradient descent]
We call  the system \eqref{eq:control-q} the controlled gradient descent applied on the quadratic  program \eqref{eq:quadratic-intro}. 
\begin{equation}\label{eq:control-q}\tag{cq-Gd}
\forall k\in\bbN,\quad
\begin{cases}
   \xkp=\xk-\gamma\Ppa{A\xk+b}+Bu_k\\
   \xo\in\bbR^n, u_k\in U\subseteq \bbR^m, B\in\bbR^{n\times m},  
\end{cases}
\end{equation}    
\end{definition}

\begin{proposition}[Convergence rates]\label{prop:conv-rate}Let us define $\tau\eqdef\normm{A-BK}<1$  and consider the \textit{controlled gradient descent} \eqref{eq:control-q} applied to solve the quadratic minimization \eqref{eq:quadratic-intro} and $\xsol\in\argmin(f)$, we have the following convergence rates
\begin{equation}
\normm{\xkp-\xsol}\leq\Ppa{1-\gak\normm{A}}^k\normm{\xo-\xsol}+\normm{B}\sum_{i=0}^{k}(1-\gak\normm{A})^{k-i}\normm{u_i-u_{\star}}
\end{equation}

Under the Feedback control,  one simply has that 
\begin{equation}
\normm{\xkp-\xsol}\leq\Ppa{1-\gak\tau}^k\normm{\xo-\xsol}, \qwhereq \normm{A-BK}=\tau.
\end{equation}
\end{proposition}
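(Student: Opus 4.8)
\emph{Proof idea.} The plan is to read \eqref{eq:control-q} as a perturbed affine fixed-point iteration about $\xsol$ and to unroll the resulting scalar recursion.

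First I would anchor the iteration at $\xsol$. Since $\xsol \in \argmin(f)$ and $f$ is the quadratic in \eqref{eq:quadratic-intro} with $A$ symmetric positive semidefinite, the optimality condition reduces to $A\xsol + b = 0$, so $\xsol \in \crit(f)$. Feeding the step in \eqref{eq:control-q} the ``stationary'' control $u_\star$ characterised by $Bu_\star = 0$ (for instance $u_\star = 0$) then leaves $\xsol$ invariant, i.e.\ $\xsol = \xsol - \gamma\Ppa{A\xsol + b} + Bu_\star$. Subtracting this identity from \eqref{eq:control-q} and using $\Ppa{A\xk + b} - \Ppa{A\xsol + b} = A(\xk - \xsol)$ yields the affine recursion
\[
\xkp - \xsol = (I - \gamma A)(\xk - \xsol) + B(u_k - u_\star) .
\]

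Second, I would take norms, apply the triangle inequality and submultiplicativity of the spectral norm, and then estimate $\normm{I - \gamma A} \leq 1 - \gamma\normm{A}$ using that $A$ is symmetric positive semidefinite and that the stepsize obeys $\gamma\normm{A} \leq 1$, so the spectrum of $I - \gamma A$ sits in $[\,1 - \gamma\normm{A},\,1\,]$. Writing $a_k \eqdef \normm{\xk - \xsol}$, $\rho \eqdef 1 - \gamma\normm{A}$ and $\beta_k \eqdef \normm{B}\,\normm{u_k - u_\star}$, this becomes the one-dimensional linear recursion $a_{k+1} \leq \rho\, a_k + \beta_k$, whose solution $a_{k+1} \leq \rho^{\,k+1} a_0 + \sum_{i=0}^{k}\rho^{\,k-i}\beta_i$ follows by an immediate induction on $k$; inserting the definitions gives the first displayed inequality (up to the harmless shift $k \leftrightarrow k+1$ in the exponent).

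For the feedback control \eqref{eq:choice-u-2} the argument short-circuits. Substituting $u(t) = Kx(t)$ turns \eqref{eq:control-q} into the homogeneous iteration $\xkp = \Ppa{I - \gamma(A - BK)}\xk - \gamma b$, consistently with the continuous dynamics \eqref{eq:cqgf}. Subtracting the fixed-point identity $\xsol = \Ppa{I - \gamma(A-BK)}\xsol - \gamma b$ --- which, given $A\xsol + b = 0$, is equivalent to $BK\xsol = 0$ --- gives $\xkp - \xsol = \Ppa{I - \gamma(A - BK)}(\xk - \xsol)$, and the bound $\normm{I - \gamma(A-BK)} \leq 1 - \gamma\tau$ with $\tau = \normm{A - BK} < 1$, iterated $k$ times, produces the second inequality.

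The genuine obstacle is the operator-norm estimate $\normm{I - \gamma M} \leq 1 - \gamma\normm{M}$ used with $M = A$ and $M = A - BK$: it is the only quantitative ingredient, it requires controlling the full spectrum of $M$ (symmetry plus a positivity/sign condition, together with the stepsize bound $\gamma\normm{M} \leq 1$), and it is exactly what determines the admissible range of $\gamma$. Everything else is the routine unrolling of a scalar linear recursion.
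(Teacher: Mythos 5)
Your decomposition is the same as the paper's: subtract the fixed-point identity at $\xsol$, obtain the affine recursion $\xkp-\xsol=(\Id-\gak A)(\xk-\xsol)+B(u_k-u_{\star})$, take norms, unroll the resulting scalar recursion, and in the feedback case substitute $u_k=K\xk$ and iterate the homogeneous map. Your anchoring of $u_{\star}$ via $Bu_{\star}=0$ is cleaner than the paper's appeal to ``a control $u_{\star}$ yielding $\xsol$'', and you keep the factor $\gak$ in the feedback recursion where the paper silently drops it.

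However, the one step you yourself single out as ``the genuine obstacle'' does fail, and your justification for it is backwards. For symmetric positive semidefinite $A$ with $\gak\normm{A}\le 1$ the spectrum of $\Id-\gak A$ indeed lies in $[1-\gak\normm{A},\,1]$, but the operator norm is the \emph{largest} modulus of an eigenvalue, namely $\normm{\Id-\gak A}=1-\gak\lambda_{\min}(A)$, which is $\ge 1-\gak\normm{A}$, with equality only when $A$ is a multiple of the identity. Since $A$ is only assumed positive semidefinite it may have $\lambda_{\min}(A)=0$, in which case $\normm{\Id-\gak A}=1$ and no contraction with factor $1-\gak\normm{A}$ holds; the same objection applies to the claimed bound $\normm{\Id-\gak(A-BK)}\le 1-\gak\tau$. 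To repair the argument one must either assume $A$ positive definite (resp.\ that the spectrum of $A-BK$ is suitably located) and state the rate in terms of the smallest eigenvalue, or measure the error in a norm adapted to $A$. To be fair, the paper's own proof commits exactly the same step, attributing the inequality $\normm{(\Id-\gak A)v}\le(1-\gak\normm{A})\normm{v}$ to the triangle inequality (which yields only the upper bound $1+\gak\normm{A}$ or the reverse lower bound $1-\gak\normm{A}$), so you have faithfully reproduced the paper's route including its weak point --- but as written the key quantitative estimate is not established in either version.
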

\begin{proof}
We leave the proof  in the Appendix~\ref{proof:prop:conv-rate}.
\end{proof}

\subsection{Controlled Proximity operator}\label{sec:ctrl-prox}
\subsubsection*{Implicit discretization on $x$ and Explicit on $u$}\label{sec:impl-explicit}
The backward Euler method or the implicit discretization of \eqref{eq:control-quad-init} yield to
\begin{equation}
   \forall k\in\bbN,\quad \frac{\xkp-\xk}{\gak}=-A\xkp-b +Bu_{k} \Leftrightarrow \xkp+\gak A\xkp+b=\xk + \gak Bu_{k},
\end{equation}
where $\xk=x(t_k), u_k=u(t_k)$ with $t_k=k\gak$ and $\gak$ is the stepsize of the discretization of the time interval $[T_0,T_1].$  We recall that $\nabla f (x)=Ax+b$. We get the following line
$$
\xkp+\gak\nabla f(\xkp)=\xk+\gak Bu_{k}
$$
\begin{equation}\label{eq:control-q-res}\tag{c-resol}
\forall k\in\bbN,\quad
\begin{cases}
   \xkp=\Ppa{\Id+\gak\Ba{\nabla f}}^{-1}\circ \psi_{u_{k}}\pa{\xk}\\
   \xo\in\bbR^n, u_k\in\bbR^m, B\in\bbR^{n\times m},  
\end{cases}
\qwhereq  \psi_{u_{k}}\pa{\xk}=\xk+\gak Bu_{k}
\end{equation}

\begin{remark}$~$

\begin{itemize}

\item This expression \eqref{eq:control-q-res} is closely related to a controlled version of the proximity operator.
\item This \textit{controlled  resolvent}  has probably a link with \textit{the  Bregman D-prox}.
\end{itemize}
\end{remark}

\subsubsection*{Controlled Proximity operator}\label{sec:ctrl-prox-op}
The Proximity operator  is defined in the seminal works by Moreau in \cite{moreau_62,moreau_63,moreau_65}  which appear to be very useful when working with splitting methods. In this subsection, we would like to simply define a controlled version of the proximity operator.
\begin{definition}\label{df:df-cprox}Let us define the following mapping that we call the controlled proximity operator for any control $u\in\bbR^{m},$
\begin{equation}\label{eq:df-cprox}
\mathrm{c-}\prox_{\gamma f}(z)=\argmin_{x\in\bbR^n}\Big\{f(x)+\frac{1}{2\gamma}\normm{x-z}^2-\pscal{Bu;x}\Big\}, 
\end{equation}
\end{definition}

We have the following Proposition which links the resolvant and the the proximity operator. 
\begin{proposition}[Controlled resolvant and proximity operators]\label{prop:ctrl-res} 
Let us consider the controlled proximity operator defined by \eqref{eq:df-cprox},  we have that solving \eqref{eq:df-cprox}  is up  to  applying the  controlled resolvent \eqref{eq:control-q-res}.
\end{proposition}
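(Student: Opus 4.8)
The plan is to compute the first-order optimality condition of the minimization problem defining $\mathrm{c-}\prox_{\gamma f}$ in \eqref{eq:df-cprox} and to recognize it as exactly the fixed-point equation defining the controlled resolvent \eqref{eq:control-q-res}. First I would record that, since $A$ is symmetric positive semidefinite, $f$ is convex and differentiable with $\nabla f(x)=Ax+b$; hence for a fixed $z\in\bbR^n$ and a fixed control $u\in\bbR^m$ the objective
\[
\Phi_{z,u}(x)\eqdef f(x)+\frac{1}{2\gamma}\normm{x-z}^2-\pscal{Bu;x}
\]
is $\frac{1}{\gamma}$-strongly convex, so it admits a unique minimizer and $\mathrm{c-}\prox_{\gamma f}(z)$ is single-valued and well defined.

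Next I would invoke Fermat's rule: a point $\bar x$ minimizes $\Phi_{z,u}$ if and only if $0=\nabla\Phi_{z,u}(\bar x)=\nabla f(\bar x)+\frac{1}{\gamma}(\bar x-z)-Bu$. Multiplying by $\gamma$ and rearranging yields $\bar x+\gamma\nabla f(\bar x)=z+\gamma Bu=\psi_u(z)$. Because $A\succeq 0$ and $\gamma>0$, the affine map $\Id+\gamma\nabla f: x\mapsto(\Id+\gamma A)x+\gamma b$ has invertible linear part $\Id+\gamma A$ (all its eigenvalues are $\geq 1$), hence is a bijection of $\bbR^n$; therefore $\bar x=\Ppa{\Id+\gamma\Ba{\nabla f}}^{-1}\circ\psi_u(z)$, which is precisely one application of the controlled resolvent \eqref{eq:control-q-res} with input $z$. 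Conversely, any $\bar x$ produced by the controlled resolvent satisfies this same stationarity identity, and by convexity stationarity is sufficient for global optimality, so the two objects coincide.

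I do not anticipate a genuine obstacle here: the argument is the standard identification of a proximal subproblem with the resolvent of its gradient, the only mild care being the well-posedness of both sides — uniqueness of the $\argmin$ and bijectivity of $\Id+\gamma\nabla f$ — which follows from $A\succeq 0$ together with $\gamma>0$ as above. The substantive content of the proposition is simply that the controlled proximity operator equals the resolvent $\Ppa{\Id+\gamma\Ba{\nabla f}}^{-1}$ pre-composed with the affine translation $\psi_u(z)=z+\gamma Bu$ that encodes the explicit step in the control variable.
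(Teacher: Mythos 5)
Your proof follows essentially the same route as the paper's: strong convexity of the objective gives a unique minimizer, and the first-order optimality condition rearranges into the resolvent identity. If anything you are more careful than the paper --- you keep track of the factor $\gamma$, obtain strong convexity from the quadratic penalty $\frac{1}{2\gamma}\normm{x-z}^2$ rather than from an assumption that $A$ is positive definite (the standing hypothesis is only positive semidefiniteness), and you justify the invertibility of $\Id+\gamma A$ --- but the argument is the same.
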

\begin{proof}
We put the proof in the Appendix~\ref{proof:prop:ctrl-res}.
\end{proof}

\section{Numerical experiments}\label{num_expe}

We consider a classical problem  which can be put in quadratic form. We  solve it  use the  controlled gradient descent and the classical gradient descent and we compare the numerical result. 

\paragraph{\textbf{Application to Compressive Sensing (CS)}}
In Compressed Sensing, we aim to solve the following minimizing
\begin{equation}\tag{$\calP$}\label{eq:solve-P}
   \min_{x\in\bbR^n} f_{cs}(x)=\frac{1}{2m}\normm{Ax-y}^2,
\end{equation}
here the matrice $A$ represents differents type of linear operator:  Gaussian measurements,  Fourier basis, Wavelet basis, or any Parseval tight frame \cite{stark_sparse_1970}. We use the following scheme 
\begin{align*}
   \begin{cases}
    \xkp=\xk-\gak\nabla f_{cs}(\xk)+Bu_k\\
     \xo\in\bbR^n,B\in\bbR^{n\times d}, u_k\in\bbR^d.
    \end{cases}
\end{align*}
Here we have that 
$
\forall x\in\bbR^n, \nabla f_{cs}(x)= \transp{A}(Ax -y),
$
thus we get the following algorithms:
 \begin{algorithm}[htbp]
	\caption{Controlled gradient descent for CS}
	\label{eq:inertial-methods}
	\textbf{Parameters:}  $0<\gamma<\frac{2}{L}$\;
	\textbf{Initialization:} $\xo\in\bbR^n$ $B\in\bbR^{n\times d}, u_0\in\bbR^d$,  \;
	\For{$k=0,1,\ldots$}{
		\vspace{-0.25cm}
		\begin{flalign} \tag{cgd}\label{cgdalgo} 
			&\begin{aligned} 
				&\xkp=\xk-\gamma \transp{A}(A\xk -y)+Bu_k\\
				&u_{k+1}=\mathrm{updatecontrol}(u_k),
			\end{aligned}&
		\end{flalign}
	}
\end{algorithm}

This is the controlled gradient descent applied to the Compressed sensing problem. Let us consider that $\forall r\in\bbN, (a_r)_{r\in[m]}$ is a standard Gaussian measurement.  The next result show the Pontryagin Kalman condition for Gaussian sensing under sufficiently large number of measurements.

\begin{lemma}[Controlability condition for Gaussian sensing] Let suppose that $A\in\bbR^{m\times n}, B\in\bbR^{n\times d}$  Gaussian sensing for $n\geq c d$  and $m\geq C d$ where $c, C$ are arbitrary large constant we have 
\[
d\leq \rank(\calC)\leq nd, \qwhereq \calC\eqdef\left[B|-AB|A^2B|\cdots|(-A)^{n-1}B\right]. 
\]
\end{lemma}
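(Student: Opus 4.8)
The plan is to separate the two inequalities: the upper bound $\rank(\calC)\le nd$ is pure dimension counting, while the lower bound $\rank(\calC)\ge d$ is an almost-sure statement about tall Gaussian matrices. Throughout I read $\calC$ as the $n\times(nd)$ matrix obtained by juxtaposing the $n$ blocks $(-A)^iB$, $i=0,\dots,n-1$, where the operator acting on $\bbR^n$ is the symmetric PSD Hessian $\transp{A}A$ associated with \eqref{eq:solve-P} (abusing notation as in the statement, since $A$ itself is $m\times n$); thus each block lies in $\bbR^{n\times d}$ and $\calC\in\bbR^{n\times(nd)}$.

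For the upper bound I would simply note that the rank of any matrix cannot exceed its number of columns, and $\calC$ has exactly $nd$ columns; hence $\rank(\calC)\le nd$. (One even has $\rank(\calC)\le\min(n,nd)=n$, but the weaker bound claimed is what we state.)

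For the lower bound, the key observation is that the first block of $\calC$ is $(-A)^0B=B$, so the $d$ columns of $B$ are a subset of the columns of $\calC$ and therefore $\rank(\calC)\ge\rank(B)$. It remains to show $\rank(B)=d$. Since $n\ge cd\ge d$ (take $c\ge 1$), $B\in\bbR^{n\times d}$ is a tall Gaussian matrix; the set of $n\times d$ matrices of rank $<d$ is an algebraic subvariety of Lebesgue measure zero, and the Gaussian law on $\bbR^{n\times d}$ is absolutely continuous with respect to Lebesgue measure, so $B$ has full column rank $d$ with probability one. Combining, $\rank(\calC)\ge d$ almost surely. (Alternatively, one can invoke a nonasymptotic lower bound on the smallest singular value of a Gaussian matrix to get the same conclusion with an explicit failure probability decaying in $n-d$, which is where the hypothesis $n\ge cd$ with large $c$ would be used.)

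The main thing to be honest about is that the stated bounds are loose: as soon as $B$ has full column rank and $A$ is arbitrary, $d\le\rank(\calC)\le nd$ holds deterministically, so the measurement count $m\ge Cd$ and the largeness of $c,C$ play no role in the inequality as written. The genuinely hard — and more useful — statement is $\rank(\calC)=n$, i.e.\ the \eqref{eq:Kalman-cond} condition itself; proving that would require showing that the Krylov subspaces $\Span\{(-\transp{A}A)^iBy\}$ fill $\bbR^n$, for instance by using that the Gram-type Hessian $\transp{A}A$ has simple eigenvalues almost surely (which would itself need $m\ge Cd$, $n$ moderate) together with the columns of $B$ having nonzero components along every eigenvector, and it is there that concentration of measure and the regime $n,m\gg d$ would really enter. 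I would flag this gap explicitly rather than attempt to hide it inside the present lemma.
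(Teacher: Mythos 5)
Your proof is correct and lands on the same two-part structure as the paper's: the upper bound is pure column counting, and the lower bound reduces to showing a Gaussian block has full column rank. Where you differ is in how that full-rank fact is established. The paper invokes a restricted-isometry-type two-sided bound $(1-\rho)\normm{u}\leq\tfrac{1}{m}\normm{Bu}\leq(1+\rho)\normm{u}$, valid with high probability once $n\geq \tfrac{16}{\rho^2}d$, to get $\rank(B)=d$, then repeats this for $AB$ and ``recursively'' for the remaining blocks; this is where the hypotheses $n\geq cd$, $m\geq Cd$ are actually consumed, and it buys an explicit (exponentially small) failure probability. You instead note that only the first block is needed, since $\rank(\calC)\geq\rank(B)$, and obtain $\rank(B)=d$ almost surely from absolute continuity of the Gaussian law with respect to Lebesgue measure --- more elementary, but without a quantitative probability estimate (your parenthetical about nonasymptotic singular-value bounds is essentially the paper's route). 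Your handling of the dimension mismatch (reading the operator in the Krylov blocks as $\transp{A}A$ so that powers make sense) addresses an issue the paper's proof passes over silently. Your closing critique is also on target: once $B$ has full column rank the inequality $d\leq\rank(\calC)\leq nd$ is deterministic, the constants $c,C$ and the condition on $m$ do no work in the statement as written, and the substantive question --- whether $\rank(\calC)=n$, i.e.\ the Pontryagin--Kalman condition itself --- is left open by both arguments; the paper's recursive claim that each block has rank $d$ does not close that gap, since the ranks of individual blocks do not add.
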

\begin{proof}
Let us consider the Gaussian matrix $B$, the proof is a direct application of the rank Theorem and the following fact. Under the bound on $n\geq\frac{16}{\rho^2}d $, we have
\begin{equation}
(1-\rho)\normm{u}\leq\frac{1}{m}\normm{Bu}\leq(1+\rho)\normm{u}.
\end{equation}
This kind of bound is called the Restricted isometry property \cite{candes2006}. This implies by the rank Theorem that $\rank(B)=d$, by using a similar type of bound for $m\geq\frac{16}{\rho^2}d$ we have that $\rank(AB)=d$. We apply a similar agurment recurcively to finally get that $d\leq \rank(\calC)\leq nd.$
\end{proof}
\paragraph{ Set-up and Observations}$~$ 

In this simulation, we want to reconstruct a signal  of size $n=128$ from $m$ differents measurements. The Lipschitz coefficient $L\approx\sqrt{m}$. We initialized our schemes using random  initialization in $\bbR^n$. We  compare  the gradient descent (blue) with the controlled gradient descent (red) with some kind of feedback control in several case: oversampled ($m>n$), sampled ($m\approx n$) and undersampled ($m<n$) regime. In figure~\ref{fig: cs-determined}, we displayed the $\ell_2-$error of reconstruction of the signal: on the left in the oversampled regime and on the right in the sampled regime. In the oversampled regime both methods behaves almost the same and one can observe a linear convergence rate. When $m\approx n$, both algorithms behave differently, we can observe that the controlled gradient descent behaves better than the gradient descent. The error with the controlled gradient descent is smaller than the error reconstruction with the gradient descent. Almost the same observation can be made in the undersampled regime see Figure~\ref{fig: cs-underdetermined}, in which case the gradient descent behaves very bad  while with the controlled gradient descent one can expect to have a better reconstruction.  It is not an easy task to find the proper control adapted  to the problem that we want to solve. This choice is also linked with the Landscape of the objective function that we want to minimize. In compressed sensing, the landscape obvioulsy depend on the regime.
\begin{figure}[h]
\centering

\includegraphics[trim={0cm 0cm 0cm 0cm},clip,width=0.49\textwidth]{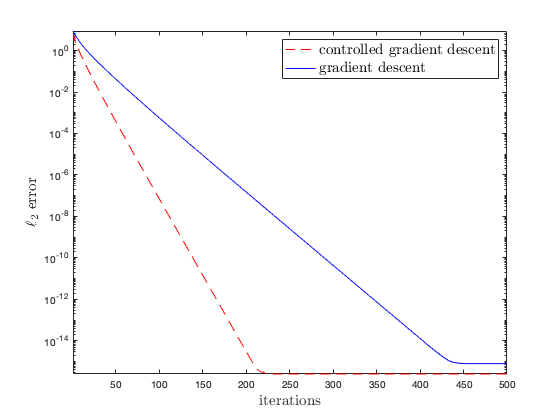}
\includegraphics[trim={0cm 0cm 0cm 0cm},clip,width=0.5\textwidth]{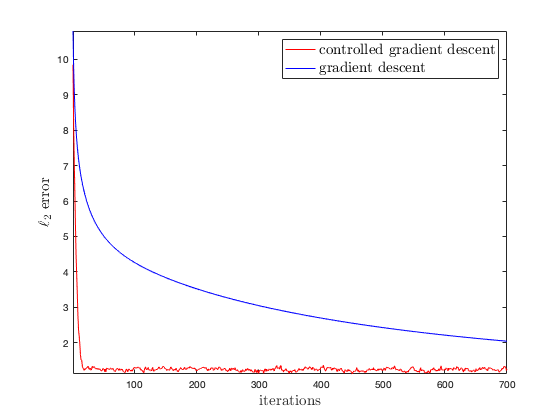}
 \caption{On the left, error of reconstruction of a signal in the oversampled regime, on the right  side error of the reconstruction of a signal  when $m\approx n$.}
\label{fig: cs-determined}
\end{figure} 
\begin{figure}[htbp]
\centering
\includegraphics[trim={0cm 0cm 0cm 0cm},clip,width=0.48\textwidth]{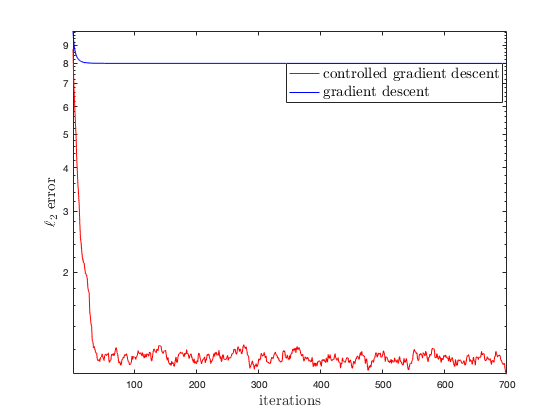}
 \caption{Error recontruction of a signal when $m=0.5\times n$ (undersampled regime).}
\label{fig: cs-underdetermined}
\end{figure}

\section*{Conclusion \& Perspectives}
In this work, we have introduced the \textit{controlled gradient flow} which is a new perspective to optimization and we have applied our result to solve the the Compress sensing problem which is a linear inverse problem. This work allows us to see optimization  and many  algorithm from another point of view. One can see several existing algorithms has control over the gradient flow. The main problem is the choice of the control which of course depend on the objective function and the choice of the direction.

\begin{appendix}
\section{Proofs for Section~\ref{sec:ctrl-opt}}
\subsection{Proofs of Theorem~\ref{theo-cond}}\label{pf:theo-cond}

Let us consider the system \eqref{eq:control-quad-init}, we have
$$
\dot{x}(t) = -A x(t) + B u(t) + b, \quad x(T_0) = x_0,
$$
where \( A \in \mathbb{R}^{n \times n} \), \( B \in \mathbb{R}^{n \times m} \), and \( u(t) \in \mathbb{R}^m \). We put
\[
y(t)\eqdef B u(t) + b \in \mathbb{R}^n.
\]

Then the system becomes
\begin{equation}\label{eq:lin-ctrl}
\begin{cases}
   \dot{x}=-Ax(t)+y(t)\\
   x(T_0)=\xo\in\bbR^n, y\in \Im(B(U))+b\subset \bbR^n.
\end{cases}
\end{equation}

We have made a change of  variable over the control $u\in U$ to get a new control $y\in\bbR^n$.  To acheive this goal, we consider the homgeneous system which is given by 
\begin{equation}\label{eq:hgsystem}
\begin{cases}
   \dot{x}=-Ax(t)\\
   x(T_0)=\xo\in\bbR^n,
\end{cases}
\end{equation}
It is simple to verify that a solution to \eqref{eq:hgsystem} is given by the following expression
\[
\forall t \in [T_0,T_1], \quad\lambda(t)=  e^{-A(t-T_0)}\xo. 
\]
where $ e^{-A(t-T_0)}$ is the resolvant from a dynamical perspective and the basic of the semi-group of contraction.

Finally, the solution to the heterogeneous system is given by the following 
\begin{equation}\label{eq:sol-het}
\forall t \in [T_0,T_1], \quad x(t)=  e^{-A(t-T_0)}\xo+\int_{t_0}^te^{-A(s-T_0)}y(s)ds.
\end{equation}
To steer the system to a desired final state \( x_d \in \bbR^n \) at time \( T_1 \), we require that
\[
x(T_1) = x_d \quad \Longleftrightarrow \quad \int_{T_0}^{T_1} e^{-A(s-T_0)} y(s)ds = x_d - e^{-A(T_1 - T_0)} x_0.
\]
For any $y\in \Im(B(U))+b$ implies that $y\in L^{\infty}([T_0,T_1], \bbR^n)$ since $B\in\bbR^{n\times m}$. Now, we define the linear operator
$$
\varphi : L^\infty([T_0, T_1], \mathbb{R}^n) \to \mathbb{R}^n, \quad
\varphi(y)\eqdef \int_{T_0}^{T_1} e^{-A(s-T_0)} y(s) ds.
$$

To ensure that any desired state $ x_d $ is reachable from $ x_0 $, the operator $ \varphi $ must be surjective. We now connect this to the controllability condition. Define the controllability matrix or Pontryagin-Kalman condition
$$
\scrC := [B \mid -AB \mid A^2 B \mid \cdots \mid (-A)^{n-1} B] \in \mathbb{R}^{n \times nm}.
$$
 
 Indeed, on one hand  we have that if $\rank(\scrC)<n$ the previous mapping $\varphi$ is not surjective. The fact that $\rank(\scrC)<n$ means that the the matrice $\calC$ is not surjective. There exists $\alpha\in\bbR^n\backslash\{0\}$  such that $\alpha \scrC=0$ thus \[\alpha B=-\alpha AB=A^2B=\ldots=(-\alpha)^{n-1}A^{n-1}B=0.\]
Now we apply the Hamilton-Cayley Theorem~\eqref{thm:hamilton-cayley} and we have
\[
(-A)^{n}= \sum_{j=0}^{n-1}a_iA^{i}
\]
Using now the expression of the exponential of a matrice and similar on $b$ and we get that
\begin{align*}
\forall k\in \bbN, (-1)^{n-1}\alpha A^{k}B=0&\Longrightarrow  (-1)^{n-1}\alpha A^{k}y=0\\
&\Longrightarrow  \alpha e^{-(t-t_0)}y=0\\
&\Longrightarrow  \alpha \int_{t_0}^te^{-A(t-T_0)}ydt=0.
\end{align*}
Therefore  the  previous corresponding mapping is surjective. On the other hand  if $\varphi$ is not surjective thus similarly it is straightforward that $\rank(\scrC)<n$. Since $\varphi$ is surjective
we get that \eqref{eq:sol-het} is can attain any point in $\bbR^n$ implying that the system is controllable.

\subsection{Proof of Proposition~\ref{prop:f-behavior}}\label{proof:prop:f-behavior}
By the Chain rule, we have that 
\begin{align*}
\frac{d}{dt}f(x_t)&=\pscal{\nabla f(x_t); \dot{x}_t}=\pscal{\nabla f(x_t); -\nabla f(x_t)+Bu_t},\\
&=-\normm{Ax_t+b}^2 +\pscal{Ax_t+b; Bu_t},\\
&=-\normm{Ax_t+b}^2 +\pscal{\Ppa{Ax_t+b}; Bu_t},\\
&=-\normm{Ax_t+b}^2 +\pscal{x_t; ABu_t}+\pscal{b; Bu_t}.
\end{align*}
Taking $u_t=Kx_t$, one get that
\begin{align*}
\frac{d}{dt}f(x_t)&=-\normm{Ax_t+b}^2 +\pscal{x_t; ABKx_t}+\pscal{b; BKx_t},\\
&=-\normm{Ax_t+b}^2 +\norm{x_t}{ABK}^2+\pscal{b; BKx_t}.
\end{align*}

\subsection{Proof of Proposition~\ref{prop:conv-rate}}\label{proof:prop:conv-rate}
Let consider a sequence $\seq{\xk}$ generated by the controlled gradient descent and $\xsol$ a solution. If the sequence $\seq{\xk}$ converges toward a  solution $\xsol$, it means that it exists a control let call it $\uustar$ yielding to $\xsol$. Therefore, we have 
\begin{align*}
\xkp-\xsol&=\xk-\gak A\xk +b+Bu_k-\xsol+\gak A\xsol -b-B\uustar,\\
&=\Ppa{\xk-\xsol}-\gak\Ppa{A\xk-A\xsol}+B\pa{u_k-\uustar},
\end{align*} 
Taking now the norm and thanks to the triangular inequality we get 
\begin{align*}
\normm{\xkp-\xsol}\leq \Ppa{1-\gak \normm{A}}\normm{\xk-\xsol}+ \normm{B}\normm{u_k-\uustar},\\
\end{align*}
Recursively and after summing, we get that 
\begin{align*}
\normm{\xkp-\xsol}\leq \Ppa{1-\gak \normm{A}}^k\normm{\xk-\xsol}+ \normm{B}\sum_{i=0}^{k} \Ppa{1-\gak \normm{A}}^{k-i}\normm{u_i-\uustar},\\
\end{align*}
In the particular case, if we use the control $u=Kx$ one get,
\begin{align*}
\xkp-\xsol&=\xk-\xsol-A\Ppa{\xk-\xsol}+ BK\Ppa{\xk-\xsol}\\
&=\xk-\xsol-\Ppa{A-BK}\Ppa{\xk-\xsol}
\end{align*}
thus we get that
\begin{align*}
\normm{\xkp-\xsol}&\leq{\normm{\Id-\pa{A-BK}}} \normm{\Ppa{\xk-\xsol}}\\
\Longrightarrow \normm{\xkp-\xsol}&\leq\normm{\Id-\pa{A-BK}}^k \normm{\Ppa{\xo-\xsol}}\\
\end{align*}
where $\normm{\Id-\pa{A-BK}}$ is the spectral radius of the corresponding matrice. One can bound the latter  and have that 
\begin{align*}
\normm{\xkp-\xsol}\leq{\Ppa{1-\normm{A-BK}}^k}\normm{\xk-\xsol}
\end{align*}
A good condition which ensure linear convergence of the sequence is  $\normm{A-BK}<1$.
\section{Proof for Section~\ref{sec:ctrl-prox}}
\subsection{Proof of Proposition~\ref{prop:ctrl-res}}\label{proof:prop:ctrl-res}
Let us consider the optimization problem \eqref{eq:df-cprox} ,  $f$ is a quadratic form over $\bbR^n$  and $A$ is symmetric positive definite which implies that $f\in\Gamma_0(\bbR^n)$ moreover $\forall u\in U, \quad x\mapsto \pscal{Bu;x}$ is a linear form  thus $f(x)-\pscal{Bu;x}$ is continuous proper and convex over $\bbR^n$. Therefore our function is strongly convex and have a unique minimizer. For any fixed  $z\in\bbR^n$, let $\Ba{\xsol}=\mathrm{c-}\prox_{ f}(z)$
The first-order optimality condition yields to:
\begin{align*}
\nabla f(\xsol)+(\xsol-z)-Bu=0 &\Longleftrightarrow \nabla f(\xsol)+\xsol=z+Bu,\\
&\Longleftrightarrow (\{ \nabla f\}+\Id)\xsol=z+ Bu,\\
&\Longleftrightarrow \xsol=(\{ \nabla f\}+\Id)^{-1}\Ppa{z+ Bu}.
\end{align*}
\end{appendix}

\bibliographystyle{plain}
{
\bibliography{ref}

\begin{thebibliography}{10}

\bibitem{cauchy1847}
Cauchy A.
\newblock Méthode générale pour la résolution des systèmes d’équations
  simultanées.
\newblock {\em Comptes Rendus de l’Académie des Sciences de Paris},
  25:536–538, January 1847.

\bibitem{attouchfadili2023}
H.~Attouch and J.~Fadili.
\newblock From the ravine method to the nesterov method and vice versa: A
  dynamical system perspective.
\newblock {\em SIAM Journal on Optimization}, 32(3):2074--2101, 2022.

\bibitem{attouch_00}
H.~Attouch, X.~Goudou, and P.~Redont.
\newblock The heavy-ball with friction method, i. the continuous dynamical
  system: global exploration of the local minima of a real-valued function by
  asymptotic analysis of a dissipative dynamical system.
\newblock {\em Communications in Contemporary Mathematics}, 2(01):1--34, 2000.

\bibitem{attouch_18}
H.~Attouch and J.~Peypouquet.
\newblock The rate of convergence of nesterov’s accelerated forward-backward
  method is actually faster than $o(k^{-2})$.
\newblock {\em SIAM Journal on Optimization}, 26(3):1824--1834, 2016.

\bibitem{brezis78}
H.~Brezis.
\newblock Asymptotic behaviour of some evolution systems.
\newblock In Michael~G. Crandall, editor, {\em Nonlinear Evolution Equations},
  pages 141--154. Academic Press, 1978.

\bibitem{candes2006}
E.J. Candes, J.~Romberg, and T.~Tao.
\newblock Robust uncertainty principles: exact signal reconstruction from
  highly incomplete frequency information.
\newblock {\em IEEE Transactions on Information Theory}, 52(2):489--509, 2006.

\bibitem{control2007}
J.-M. Coron.
\newblock {\em Control and Nonlinearity}.
\newblock American Mathematical Society, USA, 2007.

\bibitem{panos2009}
C.~A. Floudas and P.~M. Pardalos.
\newblock Encyclopedia of optimization.
\newblock (2):489--509, 2009.

\bibitem{Iri77}
M.~Iri, N.~Tomizawa, and S.~Fujishige.
\newblock On the controllability and observability of a linear system with
  combinatorial constraints.
\newblock {\em Journal of the Society of Instrument and Control Engineers},
  13:235--242, 1977.

\bibitem{kantorovich_49}
L.~Kantorovich.
\newblock On newton’s method.
\newblock {\em Trudy Mat. Inst. Steklov}, 28:104–144, 1949.

\bibitem{mordukhovich_variational_2018}
S.~M. Mordukhovich.
\newblock {\em Variational {Analysis} and Applications}.
\newblock Springer Monographs in Mathematics. Springer Cham, Berlin,
  Heidelberg, 2018.

\bibitem{moreau_62}
J.-J. Moreau.
\newblock Fonctions convexes duales et points proximaux dans un espace
  hilbertiens.
\newblock {\em Comptes rendus hebdomadaires des S\'eances de l'Acad\'emies des
  Sciences}, 255:2897--2899, 1962.

\bibitem{moreau_63}
J.-J. Moreau.
\newblock Propri\'et\'es des applications ``prox''.
\newblock {\em Comptes rendus hebdomadaires des S\'eances de l'Acad\'emies des
  Sciences}, 256:1069--1071, 1962.

\bibitem{moreau_65}
J.-J. Moreau.
\newblock Proximit\'e et dualit\'e dans un espace hilbertiens.
\newblock {\em Bulletin de la Soci\'et\'e Math\'ematiques de France},
  93:273--299, 1965.

\bibitem{nesterov_18}
Y.~Nesterov.
\newblock Lectures on convex optimization.
\newblock {\em Springer}, 137:1–17, 2018.

\bibitem{newton_60}
I.~Newton.
\newblock Philosophiae naturalis principia mathematica.
\newblock {\em Sumptibus et Ant, Philibert.}, 1760.

\bibitem{polyak_63}
B.~T. Polyak.
\newblock Gradient methods for the minimisation of functionals.
\newblock {\em U.S.S.R. Comput. Math. and Math. Phys.}, 3(4):864–878, 1963.

\bibitem{polyak_64}
B.~T. Polyak.
\newblock Some methods of speeding up the convergence of iteration methods.
\newblock {\em U.S.S.R. Comput. Math. and Math. Phys.}, 4(5):1–17, 1964.

\bibitem{Raissi2019}
N.~Raissi, C.~Sanogo, and M.~Serhani.
\newblock Differential game model for sustainability multi-fishery.
\newblock {\em Trends in Biomathematics: Mathematical Modeling for Health,
  Harvesting, and Population Dynamics: Selected works presented at the BIOMAT
  Consortium Lectures, Morocco 2018}, pages 119--137, 2019.

\bibitem{rall1966}
L.~B. Rall.
\newblock Convergence of the newton process to multiple solutions.
\newblock {\em Numer. Math.}, 9:23–37, 1966.

\bibitem{recht_tour_2018}
B.~Recht.
\newblock {\em A tour of reinforcement learning : the view from continuous}.
\newblock arxiv:1806.09460v2, 2018.

\bibitem{rockafellar_convex_1970}
R.~T. Rockafellar.
\newblock {\em Convex Analysis}.
\newblock Princeton University Press, 1970.

\bibitem{rockafellar_variational_1998}
R.~T. Rockafellar and R.~J.~B. Wets.
\newblock {\em Variational {Analysis}}, volume 317 of {\em Grundlehren der
  mathematischen {Wissenschaften}}.
\newblock Springer Berlin Heidelberg, Berlin, Heidelberg, 1998.

\bibitem{stark_sparse_1970}
J.-L. Starck, F.~Murtagh, and J.~M. Fadili.
\newblock {\em Sparse Image and Signal Processing}.
\newblock Cambridge Press., 2015.

\bibitem{trelat2005}
E.~Tr{\'e}lat.
\newblock {\em Contr{\^o}le optimal: th{\'e}orie \& applications}.
\newblock Math{\'e}matiques concr{\`e}tes. Vuibert, 2005.

\end{thebibliography}
}

\end{document}